\documentclass[11pt]{article}%

\setlength{\oddsidemargin}{-0.125in} \setlength{\topmargin}{-0.5in}
\setlength{\textwidth}{6.1in} \setlength{\textheight}{9.2in}

\setlength{\textheight}{9in} \setlength{\textwidth}{6.1in}
\setlength{\topmargin}{-40pt} \setlength{\oddsidemargin}{0pt}
\setlength{\evensidemargin}{0pt}

\setlength{\textheight}{9.45in} \setlength{\textwidth}{6.1in}
\setlength{\topmargin}{-71pt} \setlength{\oddsidemargin}{0pt}
\setlength{\evensidemargin}{-6pt} \tolerance=500
\setlength{\topmargin}{-56pt} \setlength{\oddsidemargin}{-6pt}
\usepackage{tikz}

\linespread{1.35} 
\usepackage[latin1]{inputenc}
\usepackage{natbib}

\usepackage[english]{babel}
\usepackage{pst-all}
\usepackage{amssymb}
\usepackage{amsmath,enumerate,rotate}
\usepackage{amssymb,latexsym}
\usepackage{epsfig}
\usepackage{graphicx, graphics,float,subfigure}
\usepackage{amsthm}

\usepackage{multirow}
\usepackage{tabularx,ragged2e}
\usepackage{booktabs}

\usepackage[colorlinks=true,linkcolor=blue]{hyperref}%
\usepackage{graphicx}
\hypersetup{urlcolor=blue, citecolor=blue, colorlinks=true, linkcolor=blue} 
\usepackage[margin=1in]{geometry}
\usepackage{verbatim}

\floatstyle{plaintop}
\restylefloat{table}
\usepackage[tableposition=top]{caption}
 \usepackage{mathtools}

\usepackage{soul}
\usepackage{color}
\usepackage[mathcal]{eucal}
\usepackage{mathrsfs} 
\usepackage{verbatim}
\usepackage{url}
\usepackage{setspace}
\usepackage{mathtools} 

\theoremstyle{plain}
\newtheorem{theorem}{Theorem}
\newtheorem*{theorem*}{Theorem}

\newtheorem*{corollary*}{Corollary}
\newtheorem*{definition*}{Definition}

\theoremstyle{remark}



\def\S{\mathbb{S}}
\def\R{\mathbb{R}}












\allowdisplaybreaks

\begin{document}

	

\fontsize{12}{14pt plus.8pt minus .6pt}\selectfont \vspace{0.8pc}
\centerline{\LARGE \bf Rudin Extension Theorems on Product Spaces, 
}
\vspace{16 pt} 

\centerline{\LARGE \bf 
Turning Bands,
}
\vspace{16pt} \centerline{\LARGE \bf and Random Fields on Balls cross Time }

\vspace{.8cm} 
\begin{center}
	{\large {\sc Emilio Porcu,}}\footnote{\baselineskip=10pt
	    Department of Mathematics, \\
	    Research And Data Intelligence Support Center R-DISC,\\
	    Khalifa University, Abu Dhabi, United Arab Emirates, \\
		$\&$ School of Computer Science and Statistics, \\
		Trinity College Dublin. \\
		$\&$ Millennium Nucleus Center for the Discovery of Structures in Complex Data, Chile.
 \\ 
		E-mail: emilio.porcu@ku.ac.ae  \\
	} 
	{\large {\sc Samuel F. Feng,}}\footnote{Department of Mathematics, \\
	    Research And Data Intelligence Support Center R-DISC,\\
	    Khalifa University, Abu Dhabi, United Arab Emirates, \\
	    samuel.feng@ku.ac.ae}
	{\large {\sc Xavier Emery,}}\footnote{Deparment of Mining Engineering, \\ University of Chile \\ $\&$ Advanced Mining Technology Center, \\ University of Chile \\ xemery@ing.uchile.cl  }
{\large and {\sc Ana Paula Peron,}}\footnote{Department of Mathematics, \\
Institute of Mathematical and Computer Sciences,\\
	    University of S\~ao Paulo, Brazil, \\ apperon@icmc.usp.br  }

\def\dd{d^{\prime}}

\begin{abstract}
Characteristic functions that are radially symmetric have a dual interpretation, as they can be used as the isotropic correlation functions of spatial random fields. Extensions of isotropic correlation functions from balls into $d$-dimensional Euclidean spaces, $\R^{d}$, have been understood after Rudin. Yet, extension theorems on product spaces are elusive, and a counterexample provided by Rudin on rectangles suggest that the problem is quite challenging. \\
This paper provides extension theorem for {multiradial} characteristic functions that are defined in balls embedded in $\R^d$ cross, either $\R^{\dd}$ or the unit sphere $\S^{\dd}$ embedded in $\R^{\dd+1}$, for any two positive integers $d$ and $\dd$. We then examine Turning Bands operators that provide bijections between the class of multiradial correlation functions in given product spaces, and multiradial correlations in product spaces having different dimensions. \\
The combination of extension theorems with Turning Bands provides a connection with random fields that are defined in balls cross linear or circular time.

\vspace{2cm}

{\small {\em Keywords}: Characteristic Functions; Rudin's Extensions; Random Fields; Turning Bands}
\end{abstract}
\end{center}

\newpage

\def\B{\mathbb{B}}
\def\dd{d^{\prime}}

\section{Introduction}

\subsection{Extension Problems}

The study of positive definite functions traces back to \cite{hilbert} and \cite{caratheodory}. The extension of such functions from an original space to a wider space has preoccupied mathematicians since the early $40$ies, and we refer to \cite{krein1940}, \cite{calderon} as well to the {\em tour de force} by  \cite{rudin1963extension, rudin1970}. \\
While \cite{rudin1970}'s extension theorem refers to positive definite functions that are defined over some compact interval on the real line, subsequent researches have been devoted to attain extension theorems for multidimensional spaces, the radial case being of special interest to both probability theory and spatial statistics: a {\em normalized} positive definite radial function defined in the $d$-dimensional Euclidean space, $\R^d$, is the characteristic function of a random vector in $\R^d$, as well as the correlation function of a Gaussian random field that is stationary and isotropic ({\em i.e.}, radially symmetric) in $\R^d$ \citep{schoenberg2, matheron}. For a comprehensive review on the extension problem, the reader is referred to \cite{sasvari2005}. \\
\cite{gneiting-sasvari} proved that any positive definite radial function defined in a ball embedded in 
$\R^d$, $d>1$, which is not necessarily continuous at the origin, admits an extension to a positive definite radial function in $\R^d$. Generalizations to product spaces under radiality have not been considered so far. The difficulty of the problem is actually confirmed by the counterexample produced by \cite{rudin1963extension}, who proves that positive definite functions defined on rectangles in $\R^2$ might not have a positive definite extension to the whole plane. Theorem 4.3.6 in \cite{sasvari1994positive} shows that extensions in product spaces are possible, but the radiality of the extension is not necessarily preserved. Example 4.2.9(b) in \cite{sasvari1994positive} for a strip embedded in $\R^2$ suggests that such extensions might be possible under suitable regularity assumptions on the function to be extended. 

\subsection{Turning Bands Operator}
\label{TB}

\cite{matheron1965variables, matheron1971} proposed the illustrative terms {\em mont{\'e}e} (upgrading) and {\em descente} (downgrading) to describe operators which, when applied to suitable radially symmetric characteristic functions in $\R^d$, yield radially symmetric characteristic functions in $\R^{\dd}$, for $\dd$ being larger or smaller than $d$. \cite{Wendland} adopted the name {\em walk through dimensions} to describe the role of these operators and showed their effect on radial characteristic functions in terms of smoothness. \\

Related to the mont\'ee, \cite{matheron1972, matheron} 
defined the so-called {\em Turning Bands} operator allowing for a bijection between the class of symmetric characteristic functions on the real line, and the class of radially symmetric characteristic functions in $\R^d$, for $d>1$. The impact of Turning Bands can be appreciated in subsequent developments in probability theory \citep{eaton, daley-porcu, cambanis}, spatial statistics \citep{gneiting-sasvari, gneiting1999isotropic, Porcu:Gregori:Mateu:2006}, and on geostatistical simulations \citep{lantuejoul, emery0, emery1}, to mention a few.

\subsection{Our Contribution}
We consider multiradial characteristic functions ({\em i.e.}, componentwise isotropic correlation functions) that are defined in product spaces. Specifically, we consider the case of the product of a $d$-dimensional ball with radius $1/2$, $\B_d$, with the $\dd$-dimensional Euclidean space, $\R^{\dd}$, and a positive definite function that is radial in the ball and radial in $\R^{\dd}$. {This case is of interest in application fields such as meteorology, climatology, geodesy and geophysics, where there is a need to model atmospheric, gravimetric, seismic or magnetic data indexed by latitude, longitude, altitude/depth and time, e.g., satellite data, data located in the Earth's mantle or on the Earth surface \citep{Hofmann, gillet, Meschede, Ern, Finlay, Xu}.}
We also consider the product space $\R^d \times \S^{\dd}$, for $\S^{\dd}$ the $\dd$-dimensional unit sphere embedded in $\R^{\dd+1}$; here, by multiradiality we mean that the positive definite function is radial in $\R^{d}$, and depends on the inner product on $\S^{\dd}$. 
For these two cases, we prove that Rudin's extensions are indeed possible. Our proof is based on results of independent interest: we prove characterizations of these types of positive definite functions in terms of partial Fourier transforms.  \\

Additionally, we extend Matheron's Turning Bands operators to the aforementioned product spaces. While the first case $\B_d \times \R^{\dd}$ can be proved through direct inspection, the case $\R^{d} \times \S^{\dd}$ comes to us as a surprise. 
Merging extensions with Turning Bands over product spaces, we find a connection related to Gaussian random fields that are defined over balls cross linear or circular time. Specifically, we provide new classes of nonseparable correlation functions that are isotropic over balls and symmetric on the real line, or circularly symmetric on the circle. \\ 

The outline of the paper is the following. Section \ref{sec3} provides  background material. Section \ref{sec4} challenges the extension problem for characteristic functions. Results related to new Turning Bands-type operators over product spaces are reported in Section \ref{sec5}. Section \ref{sec6} connects Sections \ref{sec4} and \ref{sec5} to illustrate a method for constructing correlation functions that are isotropic over the ball $\B_d$, and symmetric over linear time, $\R$, or circularly symmetric over circular time, $\S^1$.

\section{Background and Notation}  \label{sec3}
\subsection{Schoenberg Classes} \label{sec3a}
We consider the class ${\cal P}(\R^d, \|\cdot\|)$ of {real-valued} continuous mappings $f: [0,\infty) \to \R$ with $f(0)=1$ such that $f (\|\cdot\|)$ is positive definite and radial in $\R^d$, with $\|\cdot\|$ denoting the Euclidean norm. Such functions are termed isotropic in spatial statistics, and the reader is referred to \cite{daley-porcu}, with the references therein, for a comprehensive treatment. We also define ${\cal P}(\R^{\infty}, \|\cdot\|):= \bigcap_{d \ge 1} {\cal P}(\R^d, \|\cdot\|)$. A characterization of the class ${\cal P}(\R^d, \|\cdot\|)$ is available thanks to \cite{schoenberg2}: a continuous mapping $f$ defined on the positive real line belongs to ${\cal P}(\R^d, \|\cdot\|)$ if and only if 
\begin{equation}
\label{schoenberg1} f(x) = \int_{[0,\infty)} \Omega_d (x \xi) {\rm d} F (\xi), \qquad x \ge 0,
\end{equation}
where $F$ is a probability measure {on the positive real line} and $\Omega_d(x) = \mathbb{E} \left ( {\rm e}^{\mathsf{i} x \langle \mathbf{e}_1, \boldsymbol{\eta} \rangle}\right ) $, with $\mathsf{i}$ the imaginary unit, $\mathbf{e}_1$ a unit vector in $\R^d$, and $\boldsymbol{\eta}$ a random vector that is uniformly distributed on the spherical shell $\mathbb{S}^{d-1} \subset \R^d$. The kernel $\Omega_d$ has several representations. Here, we invoke direct inspection to write $\Omega_d$ as 
\begin{equation}
\label{Omega}  \Omega_d(x) = \frac{\Gamma(d/2) J_{d/2-1} (x) }{\left ( \frac{x}{2}\right )^{d/2-1}}= \sum_{n=0}^{\infty} \frac{\Gamma(d/2) \left (-  \frac{x^2}{4}\right )^{n} }{\Gamma(d/2+n) n!}, \qquad t \ge 0,
\end{equation}
where $J_{\nu}$ stands for the Bessel function of the first kind of order $\nu$.
Since $|\Omega_d(x)| \le \Omega_d(0) =1$ \citep{daley-porcu}, one has $f(0)=1$. Clearly, $f \in {\cal P}(\R^d, \|\cdot\|)$  is the {radial part of the } characteristic function of a random vector, $\boldsymbol{X}$, that is equal (in distribution) to the product between the random vector $\boldsymbol{\eta}$  and a random variable, $Z$, independent of $\boldsymbol{\eta}$ and distributed according to $F$. 
The class ${\cal P}(\R^d, \|\cdot\|)$ is nested, with the inclusions relation ${\cal P}(\R, |\cdot|) \supset{\cal P}(\R^2, \|\cdot\|) \supset \ldots \supset {\cal P}(\R^{\infty}, \|\cdot\|)$ being strict.

We now define the class ${\cal P}(\R^d \times \R^{\dd}, \|\cdot\|,\|\cdot \|)$ of {real-valued} continuous mappings $\varphi:[0,\infty)^2 \to \R$ with $\varphi(0,0)=1$, such that $\varphi \left ( \|\cdot\|, \|\cdot\|\right ) $ is positive definite in $\R^{d } \times \R^{\dd}$. Such functions $\varphi$ are called multiradial in \cite{porcu-mateu-christakos}, and arguments therein show that $\varphi \in {\cal P}(\R^d \times \R^{\dd}, \|\cdot\|,\|\cdot \|)$ for given $d,\dd \in \mathbb{N}$ if and only if 
\begin{equation}
\label{schoenberg2} \varphi(x,t) = \int_{[0,\infty)^2} \Omega_d (x \xi) \Omega_{\dd} (t v) {\rm d} H (v,\xi), \qquad x,t \ge 0,
\end{equation}
with $H$ being a probability measure on the positive quadrant of $\R^2$. Clearly, $\varphi \in {\cal P}(\R^d \times \R^{\dd}, \|\cdot\|,\|\cdot \|)$ implies $\varphi(\cdot,0)$ and $\varphi(0,\cdot)$ to belong to ${\cal P}(\R^d, \|\cdot\|)$ and ${\cal P}( \R^{\dd}, \|\cdot\|)$, respectively. The class ${\cal P}(\R^d \times \R^{\dd}, \|\cdot\|,\|\cdot \|)$ is nested in the same way as ${\cal P}(\R^d , \|\cdot\|)$. 

The seminal paper by \cite{schoenberg1942} characterizes the class ${\cal P}(\S^d, \theta)$ of real-valued continuous mappings $\psi:[0,\pi] \to \R$ with $\psi(0)=1$, such that $\psi(\theta(\cdot,\cdot))$ is positive definite on the unit sphere, $\S^{d} \subset \R^{d+1}$, and where $\theta$ denotes the geodesic distance, defined as $\theta(\boldsymbol{x},\boldsymbol{y})= \arccos (\langle \boldsymbol{x},\boldsymbol{y} \rangle ) $, $\boldsymbol{x},\boldsymbol{y}\in \S^d$, with $\langle \cdot, \cdot\rangle$ being the inner product in $\R^{d+1}$. Specifically,  \cite{schoenberg1942} shows that $\psi \in {\cal P}(\S^d, \theta)$ for a given $d \in \mathbb{N}$, $d>1$, if and only if 
\begin{equation}
\label{schoenberg3a}  \psi(\theta) = \sum_{n=0}^{\infty} b_{n,d} \frac{{\cal G}_{n}^{(d-1)/2} (\cos \theta)}{{\cal G}_{n}^{(d-1)/2} (1)}, \qquad \theta \in [0,\pi], 
\end{equation}
where ${\cal G}_n^{\lambda}$ is the $n$th Gegenbauer polynomial of order $\lambda > 0$ \citep{szego}, and where $\{ b_{n,d}\}_{n=0}^{\infty}$ is a uniquely determined sequence of nonnegative coefficients summing up to one, {\em i.e.}, a probability mass system. The decomposition {\eqref{schoenberg3a} remains valid for $d=1$ provided that the normalized Gegenbauer polynomials are replaced by the Chebyshev polynomials of the first kind.}

Finally, the class ${\cal P}( \R^d \times \S^{\dd}, \|\cdot\|,\theta)$ of continuous {real-valued} mappings $\psi:[0,\infty) \times [0,\pi]  \to \R$ {such that $\psi(0,0)=1$ and $\psi(\|\cdot\|,\theta)$ is positive definite} has been characterized by \cite{berg-porcu} through uniquely determined expansions of the type 
\begin{equation}
\label{schoenberg3} \psi(x,\theta)=   \sum_{n=0}^{\infty} b_{n,\dd}(x) \frac{{\cal G}_{n}^{(d-1)/2} (\cos \theta)}{{\cal G}_{n}^{(d-1)/2} (1)}, \qquad x\ge 0, \; \theta \in [0,\pi], 
\end{equation}
where $\{ b_{n,\dd} (\cdot)\}_{n=0}^{\infty}$ is a uniquely determined sequence of members of ${\cal P}(\R^d,\|\cdot\|)$ with the additional requirement that $\sum_{n=0}^{\infty} b_{n,\dd}(0) =1$ {(for $\dd=1$, one has to replace the normalized Gegenbauer polynomials in \eqref{schoenberg3} by the Chebyshev polynomials of the first kind)}. We follow \cite{daley-porcu} and \cite{berg-porcu} to call the sequences $\{ b_{n,d} \}_{n=0}^{\infty}$
in (\ref{schoenberg3a}) and $\{ b_{n,\dd} (\cdot)\}_{n=0}^{\infty}$ in (\ref{schoenberg3}), $d$-Schoenberg sequences of coefficients, and $\dd$-Schoenberg sequences of functions, respectively.

\subsection{Rudin's Extension}
\def\B{\mathbb{B}}

\cite{rudin1970} considers the class ${\cal P} (\B_d,\|\cdot\|) $ of continuous functions $g:[0,1) \to \R$ with $g(0)=1$ such that the composition $g (\|\cdot\|)$ is positive definite
in the open ball, $\B_d = \{ \boldsymbol{x} \in \R^d, \; \|\boldsymbol{x}\| < 1/2 \}$, embedded in $\R^d$. Clearly, $f \in {\cal P} (\R^d,\|\cdot\|)$ implies the corresponding restriction to $[0,1)$ to belong to ${\cal P} (\B_d,\|\cdot\|)$. The opposite is not obvious and has been shown by \cite{rudin1970}. We state the result formally for the convenience of the reader. 
\begin{theorem}[\citealp{rudin1970}]
\label{rudin} Let $d$ be a positive integer. Let $g $ belong to the class ${\cal P} (\B_d,\|\cdot\|) $. Then, there exists a continuous mapping $f: [0,\infty) \to \R$ such that $f$ belongs to ${\cal P} (\R^d,\|\cdot\|) $. Additionally, $f(t)=g(t)$ for all $t \in [0,1)$.
\end{theorem}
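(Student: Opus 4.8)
The plan is to recast the theorem, via Schoenberg's representation \eqref{schoenberg1}, as a moment problem and to solve it by a Krein-type extension of positive definiteness from the ball to $\R^d$ together with a compactness argument. Concretely, producing $f\in\cP(\R^d,\|\cdot\|)$ with $f=g$ on $[0,1)$ is the same as producing a probability measure $F$ on $[0,\infty)$ with
\[
g(r)=\int_{[0,\infty)}\Omega_d(r\xi)\,\diff F(\xi),\qquad r\in[0,1),
\]
since the same integral then extends $f$ to all of $[0,\infty)$ with $f(0)=1$. The only information we may use is that $u_0:=g(\|\cdot\|)$ is positive definite on $\B_d$, i.e.\ $\langle u_0,\phi*\widetilde\phi\rangle\ge 0$ for every real $\phi\in C_c^\infty(\B_d)$, where $\widetilde\phi(\bx):=\phi(-\bx)$; note that $\phi*\widetilde\phi$ is supported in $\B_d-\B_d$, the open ball of radius $1$, which is exactly the domain of $u_0$.

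\textbf{Extension and mollification.} The crucial step is a Krein-type statement: $u_0$ extends from $\B_d-\B_d$ to a positive-definite \emph{tempered} distribution $u$ on $\R^d$. Averaging $u$ over the rotation group $O(d)$ — which fixes $u_0$, that being radial — we may take $u$ radial; by the Bochner--Schwartz theorem $u=\widehat{\mu}$ for a positive tempered measure $\mu$. Convolving with a Gaussian $G_\ep$ replaces $\mu$ by the finite positive measure $\mu\,\widehat{G_\ep}$, so $u*G_\ep$ is a bounded continuous positive-definite radial function; dividing it by its value at the origin — which tends to $u_0(0)=g(0)=1$ as $\ep\downarrow 0$ (since $u$ agrees near the origin with the continuous function $u_0$ and $G_\ep$ is an approximate identity) and is therefore positive for small $\ep$ — yields $f_\ep\in\cP(\R^d,\|\cdot\|)$. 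Because $u*G_\ep\to u$ locally uniformly on the open set where $u$ is continuous, and $u=u_0$ there, we obtain $f_\ep(r)\to g(r)$ for every $r\in[0,1)$.

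\textbf{Passing to the limit.} Let $F_\ep$ be the Schoenberg measure of $f_\ep$, a probability measure on $[0,\infty)$. Regarding the $F_\ep$ as measures on the compact space $[0,\infty]$ and extracting, by Helly's theorem, a weakly convergent subsequence with limit $F$ (a probability measure, since total mass is preserved), we use that for each fixed $r>0$ the function $\xi\mapsto\Omega_d(r\xi)$ extends continuously to $[0,\infty]$ with value $0$ at $\infty$ (as $\Omega_d(s)\to 0$ when $s\to\infty$). Hence $\int_{[0,\infty)}\Omega_d(r\xi)\,\diff F(\xi)=\lim_\ep\int\Omega_d(r\xi)\,\diff F_\ep(\xi)=g(r)$ for $r>0$. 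Letting $r\downarrow 0$, the left side tends to $F([0,\infty))$ by dominated convergence while the right side tends to $g(0)=1$ by continuity of $g$; thus no mass escapes to infinity, $F$ is a probability measure on $[0,\infty)$, and $f(r):=\int_{[0,\infty)}\Omega_d(r\xi)\,\diff F(\xi)$ is the required extension.

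\textbf{Main obstacle.} Everything hinges on the Krein-type extension above — that there is \emph{no obstruction} to extending positive definiteness from $\B_d$ to $\R^d$. This is precisely where the corresponding problem on a rectangle fails (Rudin's counterexample recalled in the introduction), so radiality must be used essentially: it collapses the problem to a single variable, since on radial functions the relevant quadratic forms are governed by the Hankel transform of order $d/2-1$ of the (truncated) radial profile of $g$, reducing the extension to a one-dimensional problem of Krein's type on $(-1,1)$ — which for $d=1$, where $\Omega_1(x)=\cos x$, is precisely Krein's classical extension theorem for positive-definite functions on an interval. Making this reduction rigorous — verifying that the cone $\{\phi*\widetilde\phi:\phi\in C_c^\infty(\B_d)\}$ is rich enough for the Hahn--Banach step, and controlling the behaviour at the origin so that the abstract positive-definite distribution is genuinely a characteristic function — is the technical heart of the argument. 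One could alternatively try to deduce the general-$d$ case from $d=1$ via the mont\'ee/descente operators, but that would entangle the proof with the Turning Bands machinery developed later in the paper, so the route above is cleaner.
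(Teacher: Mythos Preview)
The paper does not give its own proof of this statement: Theorem~\ref{rudin} is quoted as background from \cite{rudin1970} and is used as a black box in the proofs of Theorems~\ref{lemma} and~\ref{rudin_extension_product}. So there is no in-paper argument to compare your proposal against.

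As to the proposal itself, there is a genuine gap. Your ``crucial step'' --- that the radial positive-definite function $u_0$ on $\B_d-\B_d$ extends to a positive-definite tempered distribution on $\R^d$ --- is not an auxiliary lemma but the entire content of Rudin's theorem. Everything after that step (rotational averaging, Bochner--Schwartz, Gaussian mollification, Helly compactness on $[0,\infty]$, identification of the Schoenberg measure) is standard post-processing that would work equally well once existence of \emph{some} positive-definite extension is granted; it does not touch the obstruction that makes the rectangle case fail. You recognise this in your ``Main obstacle'' paragraph, but what you offer there is a heuristic (``radiality collapses the problem to a single variable via the Hankel transform, reducing to Krein on $(-1,1)$'') rather than an argument: you do not specify which one-dimensional positive-definiteness condition the radial profile is claimed to satisfy, nor why the test cone $\{\phi*\widetilde\phi:\phi\in C_c^\infty(\B_d)\}$ is rich enough to force it, nor how the Hahn--Banach separation would run. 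Without that, the proposal is circular.

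If you want to turn this into an actual proof, the honest route is either to reproduce Rudin's original argument (which does exploit radiality in an essential way, precisely to avoid the rectangle counterexample) or to carry out the reduction you hint at: show that positive definiteness of $g(\|\cdot\|)$ on $\B_d$ is equivalent to positive definiteness, in the appropriate weighted sense, of a one-variable kernel on $(-1,1)$, and then invoke Krein. The descente/mont\'ee idea you mention at the end is also viable and is in fact close in spirit to how \cite{gneiting-sasvari} and \cite{gneiting1999isotropic} connect the classes $\cP(\B_1,|\cdot|)$ and $\cP(\B_d,\|\cdot\|)$; it would not ``entangle'' anything, since the Turning Bands identity \eqref{TB1} predates this paper and is independent of the product-space results in Section~\ref{sec5}.
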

Rudin's beautiful result provides an important message to the spatial statistics community. The class of isotropic correlation functions in $\B_d$ is not larger than the Schoenberg class ${\cal P}(\R^d, \|\cdot\|)$ restricted to $\B_d$. Hence, the function belonging to the class ${\cal P}(\B_d, \|\cdot\|)$ enjoys the scale mixture representation in Equation (\ref{schoenberg1}).

Surprisingly, analogues of Rudin's extensions for the case of product spaces are rare. \cite{sasvari1994positive} provides a generalization by considering the product space $V_2 \times G_1$, with $G_1$ being a commutative group, and $V_2$ a subgroup of an arbitrary group, $G_2$. Unfortunately, the extension to $G_2 \times G_1$ is not necessarily radial, and no extensions are up to now available for the classes introduced in Section \ref{sec3a}. Throughout, we use the notation ${\cal P}(\B_d \times \R^{\dd}, \|\cdot\|,\|\cdot\|)$ and ${\cal P}(\B_d \times \S^{\dd}, \|\cdot\|,\theta)$ in analogy with the classes that have been previously defined. 

\subsection{The Turning Bands Operator} 

We now revisit the Turning Bands operator introduced in Section \ref{TB}, which gives a bijection between ${\cal P}(\R,|\cdot|)$ and ${\cal P}(\R^d,\|\cdot\| )$. 
A slight change of notation is needed for a neater exposition. We use the subindex $d$ to indicate a function $\varphi_d$ belonging to the class ${\cal P}(\R^d,\|\cdot\| )$, $d>1$. \cite{matheron} proved that 
\begin{equation}
\label{TB1} \varphi_d(t) = \frac{2 \Gamma(d/2) }{\sqrt{\pi} \Gamma ((d-1)/2)} \frac{1}{t} \int_{0}^{t} \varphi_1(u) \left ( 1- \frac{u^2}{t^2}\right )^{(d-3)/2}   {\rm d} u,  
\end{equation} 
for $\varphi_1 \in {\cal P}(\R,|\cdot|)$. 
\cite{gneiting1999isotropic} uses Equation (\ref{TB1}) in concert with Theorem \ref{rudin} to prove that the Turning Bands operator also provides a bijection between the classes ${\cal P}(\B_1,|\cdot|)$ and ${\cal P}(\B^d,\|\cdot\| )$.

\section{The Extension Problem in Product Spaces} \label{sec4}

We start by illustrating a result that provides the basis to a constructive proof of our extension theorems. 

\begin{theorem}
\label{lemma}. (a). Let $d,\dd$ be two positive integers. Let $\varphi: [0,1) \times [0,\infty) \to \R$ be continuous with $\varphi(0,0)=1$, and such that $\varphi(x,\| \cdot\|)$ is absolutely integrable in $\R^{\dd}$ for every $x\ge 0$. Then, $\varphi$ belongs to the class ${\cal P}(\B_d \times \R^{\dd}, \|\cdot\|,\|\cdot\|)$ if and only if the mapping $\varphi_{\boldsymbol{\omega}}$, defined through 
\def\bw{\boldsymbol{\omega}}
\begin{equation}
\label{lemma_a} \varphi_{\boldsymbol{\omega}} (x) :=  \int_{\R^{\dd}} {\rm e}^{-\mathsf{i} \langle \boldsymbol{\omega}, \boldsymbol{y} \rangle} \varphi (x, \|\boldsymbol{y}\|) {\rm d} \boldsymbol{y},  \quad x\in[0,1),
\end{equation} 
is such that $\varphi_{\boldsymbol{\omega}} (x)/\varphi_{\boldsymbol{\omega}} (0)$ belongs to the class ${\cal P}(\B_d,\| \cdot\|)$ for every $\boldsymbol{\omega} \in \R^{\dd}$. \\ 
(b). Let $\psi: [0,1) \times [0,\pi] \to \R$ be continuous with $\psi(0,0)=1$. Then, $\psi \in {\cal P} (\B_d \times \S^{\dd}, \|\cdot\|,\theta)$ if and only if the functions $ b_{n,\dd}(\cdot) $, defined through 
\begin{equation}
\label{lemma_b} b_{n,\dd}(x) := c_{\dd} \int_{0}^{\pi} \psi(x,\theta) (\sin \theta)^{\dd-1} {\cal G}_{n}^{(\dd-1)/2} (\cos \theta) {\rm d} \theta, \qquad {x \in [0,1),} 
\end{equation}
form a sequence of members of ${\cal P}(\B_d,\| \cdot\|)$ for all $n \in \mathbb{N}_0$ with the additional requirement that $\sum_{n=0}^{\infty} b_{n,\dd}(0) =1$. Here, $c_{\dd}$ is a strictly positive normalization constant that depends on $\dd$.  When $\dd=1$, then Gegenbauer polynomials in \eqref{lemma_b} need be replaced by Chebyshev polynomials.
\end{theorem}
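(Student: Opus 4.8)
\medskip

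\emph{Part (a).} The plan is to argue via partial Fourier transforms in the $\R^{\dd}$ variable. For the ``if'' direction, assume $\varphi_{\boldsymbol\omega}(x)/\varphi_{\boldsymbol\omega}(0) \in {\cal P}(\B_d,\|\cdot\|)$ for every $\boldsymbol\omega$. Note first that $\varphi_{\boldsymbol\omega}$ depends on $\boldsymbol\omega$ only through $\|\boldsymbol\omega\|$ (because $\varphi(x,\|\cdot\|)$ is radial, its Fourier transform is radial too), and that $\boldsymbol\omega \mapsto \varphi_{\boldsymbol\omega}(x)$ is nonnegative and integrable: nonnegativity of $\varphi_{\boldsymbol\omega}(0)$, viewed as the Fourier transform of the radial positive definite function $\varphi(0,\|\cdot\|)$ on $\R^{\dd}$ (positive definiteness on the ball does not immediately give this, so here I would instead appeal to the case $x=0$ being handled by Bochner once we know $\varphi(0,\cdot)$ extends — or, cleaner, extract nonnegativity of $\varphi_{\boldsymbol\omega}(x)$ for \emph{all} $x$ directly from the hypothesis, since a member of ${\cal P}(\B_d,\|\cdot\|)$ evaluated at $0$ equals its normalizing constant $\varphi_{\boldsymbol\omega}(0)$, which must therefore have constant sign). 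Writing Fourier inversion
\[
\varphi(x,\|\boldsymbol y\|) \;=\; \frac{1}{(2\pi)^{\dd}} \int_{\R^{\dd}} {\rm e}^{\mathsf{i}\langle \boldsymbol\omega,\boldsymbol y\rangle}\, \varphi_{\boldsymbol\omega}(x)\, {\rm d}\boldsymbol\omega \;=\; \frac{1}{(2\pi)^{\dd}} \int_{\R^{\dd}} {\rm e}^{\mathsf{i}\langle \boldsymbol\omega,\boldsymbol y\rangle}\, \varphi_{\boldsymbol\omega}(0)\, \frac{\varphi_{\boldsymbol\omega}(x)}{\varphi_{\boldsymbol\omega}(0)}\, {\rm d}\boldsymbol\omega,
\]
one recognizes $\varphi(x,\|\boldsymbol y\|)$ as a scale mixture, with respect to the nonnegative finite measure $\varphi_{\boldsymbol\omega}(0)\,{\rm d}\boldsymbol\omega$, of products of the plane wave ${\rm e}^{\mathsf{i}\langle\boldsymbol\omega,\boldsymbol y\rangle}$ (a positive definite function on $\R^{\dd}$) with a member of ${\cal P}(\B_d,\|\cdot\|)$ (positive definite on $\B_d$). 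The product of a function positive definite on $\B_d$ with one positive definite on $\R^{\dd}$ is positive definite on $\B_d\times\R^{\dd}$, and nonnegative mixtures preserve positive definiteness; hence $\varphi \in {\cal P}(\B_d\times\R^{\dd},\|\cdot\|,\|\cdot\|)$, and $\varphi(0,0)=1$ follows from the normalization $\sum$-type identity $(2\pi)^{-\dd}\int \varphi_{\boldsymbol\omega}(0)\,{\rm d}\boldsymbol\omega = \varphi(0,0)=1$. For the ``only if'' direction, assume $\varphi$ is positive definite on $\B_d\times\R^{\dd}$; then for any finite set of points $\boldsymbol x_1,\dots,\boldsymbol x_N \in \B_d$ and scalars $c_1,\dots,c_N$, the function $\boldsymbol y \mapsto \sum_{j,k} c_j c_k\, \varphi(\|\boldsymbol x_j - \boldsymbol x_k\|,\|\boldsymbol y\|)$ is positive definite and integrable on $\R^{\dd}$, so by Bochner its Fourier transform $\sum_{j,k} c_j c_k\, \varphi_{\boldsymbol\omega}(\|\boldsymbol x_j-\boldsymbol x_k\|)$ is nonnegative for a.e.\ $\boldsymbol\omega$, and by continuity for every $\boldsymbol\omega$; taking $c_j$ to range over a countable dense set gives that $\varphi_{\boldsymbol\omega}(\|\cdot\|)$ is positive definite on $\B_d$ for every $\boldsymbol\omega$, and dividing by $\varphi_{\boldsymbol\omega}(0) > 0$ gives the normalized member of ${\cal P}(\B_d,\|\cdot\|)$.

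\medskip

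\emph{Part (b).} The plan is to transport the same idea to $\R^d \times \S^{\dd}$ using the Gegenbauer (Schoenberg) expansion on $\S^{\dd}$ in place of the Fourier transform on $\R^{\dd}$, exactly as in \eqref{schoenberg3}, but with the ambient spatial factor being $\B_d$ rather than $\R^d$. For the ``only if'' direction: if $\psi$ is positive definite on $\B_d \times \S^{\dd}$, then for fixed $\boldsymbol x_1,\dots,\boldsymbol x_N \in \B_d$ and scalars $c_1,\dots,c_N$ the zonal kernel $\theta \mapsto \sum_{j,k} c_j c_k\, \psi(\|\boldsymbol x_j-\boldsymbol x_k\|,\theta)$ is positive definite on $\S^{\dd}$; by Schoenberg's theorem \eqref{schoenberg3a} its Gegenbauer coefficients are nonnegative, and these coefficients are precisely $\sum_{j,k} c_j c_k\, b_{n,\dd}(\|\boldsymbol x_j-\boldsymbol x_k\|)$ with $b_{n,\dd}$ given by the projection formula \eqref{lemma_b} (the constant $c_{\dd}$ being the reciprocal of $\int_0^\pi (\sin\theta)^{\dd-1} {\cal G}_n^{(\dd-1)/2}(\cos\theta)^2\,{\rm d}\theta$ times the surface-measure normalization). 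Hence each $b_{n,\dd}(\|\cdot\|)$ is positive definite on $\B_d$, i.e.\ $b_{n,\dd} \in {\cal P}(\B_d,\|\cdot\|)$ up to the normalizing constant $b_{n,\dd}(0) \ge 0$; evaluating at coincident points shows $\sum_n b_{n,\dd}(0) = \psi(0,0) = 1$. Conversely, if the $b_{n,\dd}$ are members of ${\cal P}(\B_d,\|\cdot\|)$ (more precisely, nonnegative multiples of such, since ${\cal P}(\B_d,\|\cdot\|)$ is defined with value $1$ at $0$) with $\sum_n b_{n,\dd}(0)=1$, then \eqref{schoenberg3} (with $\R^d$ replaced by $\B_d$, which is legitimate because the expansion is termwise and only uses positive definiteness of each $b_{n,\dd}$ on the spatial factor) reconstructs $\psi$ as a uniformly convergent series of positive definite functions on $\B_d\times\S^{\dd}$ — each summand being the product of $b_{n,\dd}(\|\cdot\|)$, positive definite on $\B_d$, with the normalized Gegenbauer kernel ${\cal G}_n^{(\dd-1)/2}(\cos\theta)/{\cal G}_n^{(\dd-1)/2}(1)$, positive definite on $\S^{\dd}$ — hence $\psi$ is positive definite on $\B_d\times\S^{\dd}$; continuity and the value $\psi(0,0)=1$ follow from uniform convergence and the normalization. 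The $\dd=1$ case is identical with Chebyshev polynomials replacing Gegenbauer polynomials.

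\medskip

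\emph{Main obstacle.} The delicate point in both parts is the interchange of the finite positive-definiteness sum with the transform (Fourier integral in (a), Gegenbauer projection/series in (b)) together with the passage from ``a.e.\ $\boldsymbol\omega$'' (resp.\ ``for each $n$, the coefficient kernel is pointwise a.e.\ nonnegative'') to ``every $\boldsymbol\omega$'' (resp.\ ``genuinely positive definite as a function of the spatial lag''); this is where the continuity hypothesis on $\varphi$ and $\psi$, the absolute-integrability assumption in (a), and dominated convergence to justify swapping sum and integral all get used. A secondary, purely bookkeeping issue is keeping track of the normalizing constants and the sign of $\varphi_{\boldsymbol\omega}(0)$ (resp.\ $b_{n,\dd}(0)$), so that the statement ``$\varphi_{\boldsymbol\omega}(x)/\varphi_{\boldsymbol\omega}(0) \in {\cal P}(\B_d,\|\cdot\|)$'' is well posed — i.e.\ one must rule out $\varphi_{\boldsymbol\omega}(0)=0$ on a set of positive measure, which follows from $\varphi(0,\|\cdot\|)$ being a nonzero continuous integrable positive definite function on $\R^{\dd}$ (its transform is continuous, nonnegative, and not identically zero, hence positive on a nonempty open set, and one shows the mixture over exactly that set already forces $\varphi(0,0)=1$).
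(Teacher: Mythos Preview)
Your proposal is correct and close in spirit to the paper's proof, but with one notable difference worth flagging. For the necessity in part~(a), the paper argues via the Schur product: it multiplies $\varphi(\|\boldsymbol x\|,\|\boldsymbol y\|)$ by the plane wave ${\rm e}^{-\mathsf{i}\langle\boldsymbol\omega,\boldsymbol y\rangle}$ (positive definite on $\R^{\dd}$), obtains a positive definite function on $\B_d\times\R^{\dd}$, and then integrates out $\boldsymbol y$ using closure under scale mixtures to land on $\varphi_{\boldsymbol\omega}$. Your route---contract against a finite configuration in $\B_d$ first, obtain a continuous integrable positive definite function on $\R^{\dd}$, then apply Bochner to see its Fourier transform is nonnegative---is the same argument read in the other order, so no real difference there. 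The genuine divergence is in the sufficiency of part~(a): the paper does \emph{not} argue directly that the inverse Fourier mixture is positive definite on $\B_d\times\R^{\dd}$; instead it invokes Rudin's single-factor extension (Theorem~\ref{rudin}) to extend each $\varphi_{\boldsymbol\omega}$ from $\B_d$ to $\R^d$, builds $\widetilde\varphi\in{\cal P}(\R^d\times\R^{\dd},\|\cdot\|,\|\cdot\|)$, and concludes by restricting back to $[0,1)\times[0,\infty)$. Your direct mixture argument is more economical and avoids this detour (which, in the paper, effectively front-loads the extension result into the lemma). For part~(b), the paper simply cites Theorem~3.4 of Berg--Porcu and omits details; your sketch is exactly the Berg--Porcu argument specialized to $\B_d$ in place of $\R^d$, so you are in agreement there.
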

\begin{proof}
(a). The necessity is proved by direct construction. If $\varphi \in {\cal P}(\B_d \times \R^{\dd}, \|\cdot\|,\|\cdot\|)$, then a direct application of the Schur {product} theorem shows that the {complex-valued} mapping
$$ 
( \boldsymbol{x},\boldsymbol{y}) \mapsto {\rm e }^{-\mathsf{i} \langle \boldsymbol{\omega}, \boldsymbol{y} \rangle} \varphi (\|\boldsymbol{x}\|, \|\boldsymbol{y}\|), \qquad (\boldsymbol{x}, \boldsymbol{y}) \in \B_d \times \R^{\dd}, 
$$
is positive definite in $\B^{d } \times \R^{\dd}$ for every $\boldsymbol{\omega} \in \R^{\dd}$. 
Since positive definite functions are a convex cone that is closed under scale mixtures, we get that $\varphi_{\boldsymbol{\omega}}$ as defined through (\ref{lemma_a}) is positive definite in $\B_d$. The necessity part of the proof is completed by noting that $\varphi_{\boldsymbol{\omega}}$ is well-defined thanks to the integrability condition on $\varphi$ in concert with the fact that the complex exponential is uniformly bounded by one. \\

To prove the sufficiency, we let $\varphi_{\boldsymbol{\omega}}$ as defined at (\ref{lemma_a}), and let $\varphi_{\boldsymbol{\omega}}/\varphi_{\boldsymbol{\omega}}(0)$ be a member of the class ${\cal P}(\B_d,\|\cdot\|)$ for every $\boldsymbol{\omega} \in \R^{\dd}$.  Equation (\ref{lemma_a}) in concert with classical Fourier inversion allow to write
\begin{equation}
\label{lemma_c} {\varphi}(x,y)= {\frac{1}{(2\pi)^{\dd}}} \int_{\R^{\dd}} {\rm e}^{\mathsf{i} \langle \boldsymbol{\omega}, y \boldsymbol{e}_1^{\prime} \rangle} \varphi_{\boldsymbol{\omega}}(x) {\rm d} \boldsymbol{\omega}, \qquad x \in [0,1), y \in [0,\infty). 
\end{equation}
with 
$\boldsymbol{e}_1^{\prime}$ a unit vector in $\R^{\dd}$. Note that $\varphi$ does not depend on the particular choice of vector $\boldsymbol{e}_1^{\prime}$. In fact, 
let $T: \R^{\dd} \to \R^{\dd}$ be an orthogonal operator. Then, one has 
\begin{equation} \label{orthogonal} 
\varphi_{T \boldsymbol{\omega}}(x) = 
 \int_{\R^{\dd}} {\rm e}^{-\mathsf{i} \langle T \boldsymbol{\omega}, \boldsymbol{y} \rangle} \varphi (x, \|\boldsymbol{y}\|) {\rm d} \boldsymbol{y} 
= 
\int_{\R^{\dd}} {\rm e}^{-\mathsf{i} \langle \boldsymbol{\omega}, T^{-1} \boldsymbol{y} \rangle} \varphi (x, \|\boldsymbol{y}\|) {\rm d} \big ( T^{-1} \boldsymbol{y} \big ) = \varphi_{ \boldsymbol{\omega}}(x),  \end{equation} for $ \boldsymbol{\omega} \in \R^{\dd}, \; x\in[0,1)$. 
Owing to Rudin's extension theorem, there exists a mapping $\widetilde{\varphi}_{\boldsymbol{\omega}}$ that is identical to $\varphi_{\boldsymbol{\omega}}$ on $[0,1)$ and is such that $\widetilde{\varphi}_{\boldsymbol{\omega}}/\widetilde{\varphi}_{\boldsymbol{\omega}}(0)$ belongs to ${\cal P}(\R^d,\|\cdot\|)$ for every $\boldsymbol{\omega} \in \R^{\dd}$. This implies that the function $(x,y) \mapsto \widetilde{\varphi}(x,y)= {\frac{1}{(2 \pi)^{\dd}}} \int_{\R^{\dd}} {{\rm e}^{\mathsf{i} \langle \boldsymbol{\omega},y \boldsymbol{e}_1^{\prime} \rangle}} \widetilde{\varphi}_{\boldsymbol{\omega}}(x) {\rm d} \boldsymbol{\omega}$ 
 belongs to the class ${\cal P}(\R^d \times \R^{\dd},\|\cdot\|,\|\cdot\|)$. The proof is concluded by noting that $\widetilde{\varphi}$ is identical to $\varphi$ on $[0,1)\times [0,\infty)$. 


(b). The proof of this part of the statement is a straight application of Theorem 3.4 in \cite{berg-porcu} and is thus omitted. 
\end{proof}

We are now ready to provide our Rudin-type extensions for the product spaces considered in this paper. 
\begin{theorem}
\label{rudin_extension_product} 
(a). Let $\varphi: [0,1) \times [0,\infty) \to \R$ be a member of the class ${\cal P}(\B_d \times \R^{\dd},\|\cdot\|,\|\cdot\|) $ for some $d,\dd \in \mathbb{N}$, with the additional requirement that ${\varphi}(x, \|\cdot\|)$ is absolutely integrable in $\R^{\dd}$ for all $x \ge 0.$ Then, there exists a mapping $\widetilde{\varphi}:  [0,\infty)^2 \to \R$ belonging to the class ${\cal P}(\R^d \times \R^{\dd},\|\cdot\|,\|\cdot\|) $


(b). Let $\psi:[0,1) \times [0,\pi] \to \R$ be a member of the class ${\cal P} (\B_d \times \S^{\dd}, \|\cdot\|,\theta)$. Then, there exists a mapping $\widetilde{\psi}: [0,\infty) \times [0,\pi] \to \R$ 
belonging to the class ${\cal P} (\R^d \times \S^{\dd}, \|\cdot\|,\theta)$ such that $\widetilde{\psi} = \psi$ on ${[0,1)} \times [0,\pi]$. 
\end{theorem}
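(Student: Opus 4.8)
The plan is to reduce both parts of Theorem~\ref{rudin_extension_product} to the combination of Theorem~\ref{lemma} (the partial-Fourier, resp. Schoenberg-expansion, characterization) with the classical Rudin extension, Theorem~\ref{rudin}. In fact, part (a) is essentially already carried out inside the sufficiency argument for Theorem~\ref{lemma}(a): given $\varphi \in {\cal P}(\B_d \times \R^{\dd},\|\cdot\|,\|\cdot\|)$ that is absolutely integrable in the second argument, Theorem~\ref{lemma}(a) tells us that $\varphi_{\boldsymbol{\omega}}/\varphi_{\boldsymbol{\omega}}(0) \in {\cal P}(\B_d,\|\cdot\|)$ for every $\boldsymbol{\omega}\in\R^{\dd}$. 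First I would apply Rudin's theorem to each such radial function, obtaining $\widetilde{\varphi}_{\boldsymbol{\omega}}$ with $\widetilde{\varphi}_{\boldsymbol{\omega}}/\widetilde{\varphi}_{\boldsymbol{\omega}}(0)\in{\cal P}(\R^d,\|\cdot\|)$ and $\widetilde{\varphi}_{\boldsymbol{\omega}} = \varphi_{\boldsymbol{\omega}}$ on $[0,1)$ (choosing the extension so $\widetilde{\varphi}_{\boldsymbol{\omega}}(0)=\varphi_{\boldsymbol{\omega}}(0)$, which is automatic since the two functions agree at $0\in[0,1)$). Then I would define $\widetilde{\varphi}(x,y) := (2\pi)^{-\dd}\int_{\R^{\dd}} {\rm e}^{\mathsf{i}\langle\boldsymbol{\omega},y\boldsymbol{e}_1'\rangle}\widetilde{\varphi}_{\boldsymbol{\omega}}(x)\,{\rm d}\boldsymbol{\omega}$ and invoke the ``only if'' direction of Theorem~\ref{lemma}(a)'s reasoning in reverse: since each $\widetilde{\varphi}_{\boldsymbol{\omega}}/\widetilde{\varphi}_{\boldsymbol{\omega}}(0)$ lies in ${\cal P}(\R^d,\|\cdot\|)$ and the Fourier synthesis over $\boldsymbol{\omega}$ reconstitutes a multiradial positive definite function, $\widetilde{\varphi}\in{\cal P}(\R^d\times\R^{\dd},\|\cdot\|,\|\cdot\|)$; and by Fourier inversion $\widetilde{\varphi}=\varphi$ on $[0,1)\times[0,\infty)$, in particular $\widetilde{\varphi}(0,0)=\varphi(0,0)=1$.

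For part (b) the strategy is the same but with the Schoenberg/Gegenbauer machinery replacing the partial Fourier transform. By Theorem~\ref{lemma}(b), membership $\psi\in{\cal P}(\B_d\times\S^{\dd},\|\cdot\|,\theta)$ is equivalent to the coefficient functions $b_{n,\dd}(\cdot)$ defined by \eqref{lemma_b} forming a sequence in ${\cal P}(\B_d,\|\cdot\|)$ with $\sum_{n\ge0} b_{n,\dd}(0)=1$. I would apply Rudin's Theorem~\ref{rudin} to each $b_{n,\dd}$ individually, obtaining extensions $\widetilde{b}_{n,\dd}\in{\cal P}(\R^d,\|\cdot\|)$ up to normalization; care is needed because each $b_{n,\dd}$ need not satisfy $b_{n,\dd}(0)=1$, so one applies Rudin to $b_{n,\dd}/b_{n,\dd}(0)$ when $b_{n,\dd}(0)>0$ and takes $\widetilde{b}_{n,\dd}\equiv 0$ when $b_{n,\dd}(0)=0$ (noting that a function in ${\cal P}(\B_d,\|\cdot\|)$ vanishing at the origin must vanish identically, since positive definiteness forces $|b_{n,\dd}(x)|\le b_{n,\dd}(0)$). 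Then $\widetilde{b}_{n,\dd}(0)=b_{n,\dd}(0)$ in all cases, so $\sum_{n\ge0}\widetilde{b}_{n,\dd}(0)=1$ still holds, and I would define $\widetilde{\psi}$ via the expansion \eqref{schoenberg3} with coefficients $\widetilde{b}_{n,\dd}$; the Berg--Porcu characterization \eqref{schoenberg3} then gives $\widetilde{\psi}\in{\cal P}(\R^d\times\S^{\dd},\|\cdot\|,\theta)$, and since each $\widetilde{b}_{n,\dd}=b_{n,\dd}$ on $[0,1)$ the two expansions agree term by term, so $\widetilde{\psi}=\psi$ on $[0,1)\times[0,\pi]$. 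The $\dd=1$ case is handled identically with Chebyshev polynomials.

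The main obstacle, and the point deserving the most care, is the convergence and well-definedness of the synthesized extension in each case. In part (a), one must argue that $\widetilde{\varphi}$ is a genuine, finite, continuous function: Rudin's extension preserves the values on $[0,1)$ but controls neither the decay of $\widetilde{\varphi}_{\boldsymbol{\omega}}$ in $x$ nor — more importantly — the behavior of $\widetilde{\varphi}_{\boldsymbol{\omega}}(0)$ as a function of $\boldsymbol{\omega}$, so the integral defining $\widetilde{\varphi}(x,y)$ needs justification (for instance via dominated convergence using $|\widetilde{\varphi}_{\boldsymbol{\omega}}(x)|\le\widetilde{\varphi}_{\boldsymbol{\omega}}(0)$ together with integrability of $\boldsymbol{\omega}\mapsto\widetilde{\varphi}_{\boldsymbol{\omega}}(0)=\varphi_{\boldsymbol{\omega}}(0)$, the latter being the Fourier transform of an integrable radial function evaluated through $\varphi(0,\cdot)$). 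In part (b), the analogous subtlety is whether $\sum_n\widetilde{b}_{n,\dd}(x)\,{\cal G}_n^{(\dd-1)/2}(\cos\theta)/{\cal G}_n^{(\dd-1)/2}(1)$ converges for $x\ge 1$; here the requirement $\sum_n\widetilde{b}_{n,\dd}(0)=1$ combined with $|\widetilde{b}_{n,\dd}(x)|\le\widetilde{b}_{n,\dd}(0)$ and $|{\cal G}_n^{\lambda}(\cos\theta)|\le{\cal G}_n^{\lambda}(1)$ forces absolute and uniform convergence, so this is more routine. I expect the bulk of the write-up to consist of verifying these integrability/summability conditions and the continuity of the resulting extensions, while the structural skeleton — Rudin coordinate-wise, then re-synthesize — is immediate from Theorem~\ref{lemma}.
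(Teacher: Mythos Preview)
Your proposal is correct and follows essentially the same route as the paper: in both parts you invoke Theorem~\ref{lemma} to pass to the coefficient functions ($\varphi_{\boldsymbol{\omega}}$, resp.\ $b_{n,\dd}$), apply Rudin's Theorem~\ref{rudin} to each of them, and then re-synthesize via Fourier inversion (resp.\ the Berg--Porcu expansion). Your treatment of the integrability in~(a) via $|\widetilde{\varphi}_{\boldsymbol{\omega}}(x)|\le\widetilde{\varphi}_{\boldsymbol{\omega}}(0)=\varphi_{\boldsymbol{\omega}}(0)$ and of the series convergence in~(b) via $|\widetilde{b}_{n,\dd}(x)|\le\widetilde{b}_{n,\dd}(0)$ matches the paper's, and your explicit handling of the degenerate case $b_{n,\dd}(0)=0$ is in fact more careful than the paper's own write-up.
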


\begin{proof}
Let ${\varphi}$ be as asserted. We use part (a) of Theorem \ref{lemma} to claim that the mapping $\varphi_{\boldsymbol{\omega}}/\varphi_{\boldsymbol{\omega}}(0)$ as defined in (\ref{lemma_a}) belongs to the class ${\cal P}(\B_d,\|\cdot\|)$ for all $\boldsymbol{\omega} \in \R^{\dd}$. Hence, Rudin's extension theorem (see Theorem \ref{rudin}) implies that there exists a mapping $\widetilde{\varphi}_{\boldsymbol{\omega}}: [0,\infty) \to \R$ that is identical to $\varphi_{\boldsymbol{\omega}}$ on $[0,1)$ and such that $\widetilde{\varphi}_{\boldsymbol{\omega}}/\widetilde{\varphi}_{\boldsymbol{\omega}}(0)$ belongs to the class ${\cal P}(\R^d,\|\cdot\|)$ a.e. $\boldsymbol{\omega} \in \R^{\dd}$ 
Hence, we make use of (\ref{orthogonal}) in concert with classical Fourier inversion to claim that the mapping $\widetilde{\varphi}$ defined on the positive quadrant of $\R^2$ through 
$$ \widetilde{\varphi}(x, y):={\frac{1}{(2\pi)^{\dd}}} \int_{\R^{\dd}} {\rm e}^{\mathsf{i} \langle \boldsymbol{\omega}, y \boldsymbol{e}_1^{\prime} \rangle} \widetilde{\varphi}_{\boldsymbol{\omega}} {(x)} {\rm d} \boldsymbol{\omega}, \qquad x, y \ge 0, $$
with $\boldsymbol{e}_1^{\prime}$ a unit vector in $\mathbb{R}^{\dd}$
, is identical to $\varphi$ on $[0,1) \times [0,\infty)$. {This mapping belongs to the class ${\cal P}(\R^d \times \R^{\dd},\|\cdot\|,\|\cdot\|)$ owing to the Schur product theorem and the fact that the class of positive definite functions is a convex cone closed under scale mixtures.} Note that $\widetilde{\varphi}$ is well defined since $\widetilde{\varphi}_{\boldsymbol{\omega}}/\widetilde{\varphi}_{\boldsymbol{\omega}}(0) \in {\cal P}(\R^d,\|\cdot\|)$ implies $ |\widetilde{\varphi}_{\boldsymbol{\omega}}(x)| \le \widetilde{\varphi}_{\boldsymbol{\omega}}(0) {= \varphi}_{\boldsymbol{\omega}}(0)$ for all $x >0$. Hence, the existence of the integral above is directly deduced from the {existence of $\varphi(0,y)$ for any $y \geq 0$.}\\ 
(b). We provide a constructive proof. By Theorem \ref{lemma}, part (b), $\psi \in {\cal P} (\B_d \times \S^{\dd}, \|\cdot\|,\theta)$ implies the sequence {of functions} $\{ b_{n,\dd}(\cdot)\}_{n=0}^{\infty}$ to be contained in ${\cal P} (\B_d , \|\cdot\|)$. Hence, we can invoke again Theorem \ref{rudin} to claim that there exists a sequence $\{ B_{n,\dd}(\cdot)\}_{n=0}^{\infty}$ {of functions} in ${\cal P} (\R^d , \|\cdot\|)$ that is identical to $\{ b_{n,\dd}(\cdot)\}_{n=0}^{\infty}$ on $[0,1)$. Additionally, the summability of  $\{ B_{n,\dd}(\cdot)\}_{n=0}^{\infty}$ at zero is inherited from that of $\{ b_{n,\dd}(\cdot)\}_{n=0}^{\infty}$ at zero. Thus, a direct application of {classical inversion formulae and} Theorem 3.4 in \cite{berg-porcu} shows that the mapping 
$$ (x,\theta) \mapsto \widetilde{\psi}(x,\theta)=  \sum_{n=0}^{\infty} B_{n,\dd}(x) \frac{{\cal G}_{n}^{(d-1)/2} (\cos \theta)}{{\cal G}_{n}^{(d-1)/2} (1)}, \qquad x \ge 0, \theta \in [0,\pi],$$
belongs to the class ${\cal P}(\R^d \times \S^{\dd},\|\cdot\|,\theta)$ {and is identical to $\psi$ on $[0,1) \times [0,\pi)$}. 
\end{proof}
Some comments are in order. A direct inspection into Theorem 4.3.2 of \cite{sasvari1994positive} in concert with Theorem \ref{lemma} shows that radiality over the second arguments is actually not necessary, so that our extension theorem works {\em mutatis mutandis} in product spaces with radiality in $\B_d$ only. We also note that our result generalizes 4.1.9 in \cite{sasvari1994positive}, corresponding to the case of a function in ${\cal P}(\B_1 \times \R, |\cdot|,|\cdot|)$. \\

Theorem \ref{lemma} does not help finding multiradial extensions for the case $\B_d \times \B_{\dd}  $. Again, Theorem 4.3.2 in \cite{sasvari1994positive} shows that such extensions are possible, but does not allow to determine whether those can be multiradial. We certainly consider this as an open problem. \\
We finally note that Theorem \ref{rudin_extension_product} does not contradict \cite{rudin1963extension}, who proved that positive definite functions defined in hyperrectangles of $\R^d$ might not be extended to positive definite functions in $\R^d$, for $d>1$. 

In turn, Rudin's counterexample \citep{rudin1963extension} shows that positive definite functions defined in hyperrectangles might not be extendable to hyperplanes. Yet, some cases allow for a positive answer. Let $\varphi \in {\cal P}(\B_d \times \B_{\dd},\|\cdot\|,\|\cdot\|)$ such that $\varphi(x,t) = \varphi_1(x) \varphi_2(t)$, for $\varphi_1 \in {\cal P}(\B_d,\|\cdot\|)$ and $\varphi_2 \in {\cal P}(\B_{\dd},\|\cdot\|)$. Then, $\varphi_1$ extends to $\widetilde{\varphi}_1 \in {\cal P}(\R^d,\|\cdot\|)$ and $\varphi_2$ extends to $\widetilde{\varphi}_2 \in {\cal P}(\R^{\dd},\|\cdot\|)$, i.e., $\varphi_i = \widetilde{\varphi}_i$ on $[0,1)$, $i=1,2$. Hence, the product $\widetilde{\varphi}(x,t) = \widetilde{\varphi}_1(x) \widetilde{\varphi}_2(t)$ is an extension of $\varphi$ in the sense of Rudin. A similar comment applies to the function
$$ \varphi (x,t):= \int_{[0,\infty)} \int_{[0,\infty)} \Omega_d(x {u}) \Omega_{\dd}(t v) {\rm d} F(u,v), $$
for $F$ a probability measure {on the positive quadrant of $\mathbb{R}^2$}, with $\Omega_d$ as defined at (\ref{Omega}). This does not mean that the classes ${\cal P}(\B_d \times \R^{\dd}, \|\cdot\|,\|\cdot\| )$ and ${\cal P}(\R^d \times \R^{\dd}, \|\cdot\|,\|\cdot\| )$ are bijective. This is certainly true for the classes ${\cal P}(\B_d , \|\cdot\| )$ and ${\cal P}(\R^d, \|\cdot\| )$, as proved by \cite{gneiting-sasvari}.

\subsection{Uniqueness, Indeterminacy, and Measurability}
Theorem \ref{lemma} in concert with Krein's work \citep{krein1940} show that, if $\varphi_{\boldsymbol{\omega}}$ in (\ref{lemma_a}) is analytic, or if $\varphi_{\boldsymbol{\omega}}(t_o)=1$ for some $t_o \in [-1,1]$, then the extension of $\varphi \in {\cal P}(\B_1 \times \R^{\dd},|\cdot|,\|\cdot\|) $ to  ${\cal P}(\R \times \R^{\dd},|\cdot|,\|\cdot\|)$ is unique. We are not aware of any extension for higher dimensional spaces. The alternative to uniqueness is indeterminacy, which happens when there is a countable number of extensions. This would be definitely worth of a thorough investigation. \\

A beautiful result in \cite{crum} shows that, for a measurable mapping, $\varphi$, that is isotropic and positive definite in $\R^d$, with $d>1$, then $\varphi$ is continuous except possibly at zero. This proves a conjecture by \cite{schoenberg2}. The implications of such a result are illustrated by \cite{gneiting-sasvari}: from a geostatistical perspective, the restriction to measurable functions is immaterial, and practically we can write any isotropic covariance function on $\R^{d}$, $d>1$, as the sum of a pure nugget effect ({\em i.e.}, a covariance function that is identically equal to zero except for the origin) and a continuous covariance function. \cite{gneiting-sasvari} then couple Crum's result with Rudin's extension to show that every measurable isotropic correlation on $\B_d$ admits a measurable extension in the sense of Rudin. 
Measurability and Crum's decomposition might be an issue for the case of product spaces. An illustration follows: consider a {\em space-time} correlation function $\varphi \in {\cal P}(\B_d \times \B_1, \|\cdot\|, |\cdot|)$ defined as $\varphi(x,t) = \varphi_1(x) \varphi_2(t)$, for $\varphi_{1} \in {\cal P}(\B_d,\|\cdot\|)$ and $\varphi_2 \in {\cal P}(\B_1,|\cdot|)$. Clearly, the measurability theorem is not valid for $\varphi_2$ - see \cite{crum} for a counterexample. Hence, it seems that the product space case inherits the same problem emphasized by \cite{crum}.

\section{Matheron's Turning Bands Operator in Product Spaces} \label{sec5}

 Turning Bands operators are largely unexplored in product spaces. For the class ${\cal P}(\R^d \times \R^{\dd}, \|\cdot\|,\|\cdot\| )$, we invoke the arguments in \cite{porcu-mateu-christakos} to assert that $\varphi \in {\cal P}(\R^d \times \R^{\dd}, \|\cdot\|,\|\cdot\| )$ if and only if it can be written as in Equation (\ref{schoenberg2}), for a probability distribution $H$ defined on the positive quadrant of $\R^2$. Direct inspection allows to rewrite (\ref{schoenberg2}) as 
\begin{equation}
\label{TB_origin}  \varphi(x,t) = \int_{[0,\infty)} \Omega_d(xu) {\rm d} \widetilde{H}_{t,\dd}(u), \qquad x,t \ge 0,
\end{equation}
 where $\widetilde{H}_{t,\dd}(u):= \int_{[0,\infty)} \Omega_{\dd}(tv) {\rm d} H(u,v)$. One has $|\Omega_{\dd}(t)| \le \Omega_{\dd}(0)=1 $, implying $|\widetilde{H}_{t,\dd}(u)| \le \widetilde{H}_{0,\dd}(u) \le 1$, since the total mass of $H$ over $\R^2$ is identically equal to one. A similar argument implies 
$|\widetilde{H}_{t,\dd}(u)| \le \widetilde{H}_{t,\dd}(0) \le 1$. Thus, we can use Equation (\ref{TB_origin}) in concert with Fubini's theorem, as well as an integral representation for Bessel functions and Formula 9.1.20 of Abramowitz
and Stegun (1972), to get the following relation:  for $\varphi_{d,\dd} \in {\cal P}(\R^d \times \R^{\dd}, \|\cdot\|,\|\cdot\| )$, one has 
\begin{equation}
\label{TB2} \varphi_{d,\dd}(x,t) = \frac{2 \Gamma(d/2) }{\sqrt{\pi} \Gamma ((d-1)/2)} \frac{1}{x} \int_{0}^{x} \varphi_{1,\dd}(u,t) \left ( 1- \frac{u^2}{x^2}\right )^{(d-3)/2}   {\rm d} u,  
\end{equation} 
for $\varphi_{1,\dd} \in {\cal P}(\R\times \R^{\dd},|\cdot|,\|\cdot\|)$. Clearly, such an operator is in bijection between the classes $ {\cal P}(\R\times \R^{\dd},|\cdot|,\|\cdot\|)$ and ${\cal P}(\R^d \times \R^{\dd}, \|\cdot\|,\|\cdot\| )$. Hence, we can use Theorem \ref{rudin_extension_product}, part (a), while invoking the arguments in \cite{gneiting1999isotropic}, to claim that the Turning Bands operator provides a bijection between the classes $ {\cal P}(\B_1 \times \R^{\dd},|\cdot|,\|\cdot\|)$ and ${\cal P}(\B^d \times \R^{\dd}, \|\cdot\|,\|\cdot\| )$. \\

A less obvious result is to prove that such bijections hold for the class ${\cal P}(\R^d \times \S^{\dd},\|\cdot\|,\theta)$ as well. This is stated formally hereinafter. 
\begin{theorem}
\label{turning_bands_theorem} Let $d,\dd$ be positive integers. Let $\psi_{1,\dd}$ be a member of the class ${\cal P}(\R \times \S^{\dd},|\cdot|,\theta)$. Then, the Turning Bands operator in Equation (\ref{TB2}) provides a function $\psi_{d,\dd}$ being a member of the class ${\cal P}(\R^d \times \S^{\dd}, \|\cdot\|,\theta)$.
\end{theorem}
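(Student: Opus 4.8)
The plan is to reduce the statement to the already-proven scalar Turning Bands bijection between $\cP(\R,|\cdot|)$ and $\cP(\R^d,\|\cdot\|)$, applied coefficient-by-coefficient to the Schoenberg expansion on the sphere. First I would use the characterization \eqref{schoenberg3}: a function $\psi_{1,\dd}\in\cP(\R\times\S^{\dd},|\cdot|,\theta)$ admits a unique expansion $\psi_{1,\dd}(u,\theta)=\sum_{n=0}^{\infty} a_{n,\dd}(u)\,{\cal G}_{n}^{(\dd-1)/2}(\cos\theta)/{\cal G}_{n}^{(\dd-1)/2}(1)$, where $\{a_{n,\dd}(\cdot)\}$ is a sequence in $\cP(\R,|\cdot|)$ with $\sum_n a_{n,\dd}(0)=1$. (Here I am using the sphere dimension $\dd$ to index the Gegenbauer order, matching the setup of \cite{berg-porcu}.)

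Next I would apply the Turning Bands operator in \eqref{TB2} to $\psi_{1,\dd}$ in its first argument. The key observation is that the operator in \eqref{TB2} is an \emph{integral operator acting only on the first variable}, with a fixed kernel depending on $d$; hence it commutes with the termwise sum and passes inside the Gegenbauer series. Concretely, writing $\mathcal{T}_d$ for the map $\varphi_1(\cdot)\mapsto \frac{2\Gamma(d/2)}{\sqrt\pi\,\Gamma((d-1)/2)}\frac{1}{x}\int_0^x \varphi_1(u)(1-u^2/x^2)^{(d-3)/2}\,{\rm d}u$, one gets $\psi_{d,\dd}(x,\theta)=\sum_{n=0}^\infty (\mathcal{T}_d a_{n,\dd})(x)\,{\cal G}_n^{(\dd-1)/2}(\cos\theta)/{\cal G}_n^{(\dd-1)/2}(1)$. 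Since the classical Turning Bands result says $\mathcal{T}_d$ maps $\cP(\R,|\cdot|)$ into $\cP(\R^d,\|\cdot\|)$, each $\mathcal{T}_d a_{n,\dd}$ lies in $\cP(\R^d,\|\cdot\|)$. The normalization at the origin is preserved because $\mathcal{T}_d$ evaluated at $x=0$ reduces (by a standard limiting argument, or by the Bessel-integral identity underlying \eqref{TB2}) to the identity: $(\mathcal{T}_d a_{n,\dd})(0)=a_{n,\dd}(0)$, so $\sum_n (\mathcal{T}_d a_{n,\dd})(0)=\sum_n a_{n,\dd}(0)=1$. Invoking \eqref{schoenberg3} (Theorem 3.4 of \cite{berg-porcu}) in the other direction, the resulting series defines a member of $\cP(\R^d\times\S^{\dd},\|\cdot\|,\theta)$.

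The main obstacle, and the point requiring care, is justifying the interchange of the Turning Bands integral with the infinite Gegenbauer sum, together with the attendant convergence and continuity claims. I would handle this by controlling the coefficient functions uniformly: since $|a_{n,\dd}(u)|\le a_{n,\dd}(0)$ for each $n$ (as $a_{n,\dd}\in\cP(\R,|\cdot|)$) and $\sum_n a_{n,\dd}(0)<\infty$, together with the uniform bound $|{\cal G}_n^{(\dd-1)/2}(\cos\theta)|\le {\cal G}_n^{(\dd-1)/2}(1)$, the series for $\psi_{1,\dd}$ converges absolutely and uniformly; the Turning Bands kernel $(1-u^2/x^2)^{(d-3)/2}$ is integrable on $(0,x)$ for every fixed $x>0$, so dominated convergence (or Fubini, for $d\ge 3$; a limiting argument from $d\ge 3$ for the borderline small-$d$ cases, as in \cite{gneiting1999isotropic} and the derivation of \eqref{TB2}) legitimizes moving $\mathcal{T}_d$ inside the sum. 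Continuity of $\psi_{d,\dd}$ on $[0,\infty)\times[0,\pi]$ follows from uniform convergence of the transformed series together with continuity of each $\mathcal{T}_d a_{n,\dd}$. Finally, the $\dd=1$ case is treated identically with Chebyshev polynomials replacing the normalized Gegenbauer polynomials, exactly as in the statements of \eqref{schoenberg3a} and \eqref{schoenberg3}.
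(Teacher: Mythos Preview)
Your proposal is correct and follows essentially the same route as the paper: both arguments expand $\psi_{1,\dd}$ in its Schoenberg--Gegenbauer series via the Berg--Porcu characterization, apply the scalar Turning Bands operator coefficientwise, invoke the classical bijection $\cP(\R,|\cdot|)\to\cP(\R^d,\|\cdot\|)$ on each coefficient, and then justify swapping the integral with the series using the uniform bound $|a_{n,\dd}(u)|\le a_{n,\dd}(0)$ and summability at the origin. Your treatment is in fact a bit more explicit than the paper's about the dominated-convergence justification and the borderline $d=2$ kernel, but the underlying strategy is identical.
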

\begin{proof}
By assumption, $\psi_{1,\dd} \in {\cal P}(\R \times \S^{\dd},|\cdot|,\theta)$. This implies that there exists a sequence of functions $\{ b_{n,\dd}(\cdot)\}_{n=0}^{\infty}$ such that $b_{n,\dd}(\cdot) / b_{n,\dd}(0) \in {\cal P}(\R,|\cdot|)$ with $\sum_n b_{n,\dd}(0)=1$. Hence, we can apply Matheron's Turning Bands as in Equation (\ref{TB1}) to obtain a sequence  $\{ \widetilde{b}_{n,\dd}(\cdot)\}_{n=0}^{\infty}$ in ${\cal P}(\R^d,\|\cdot\|)$, with 
\begin{equation}
\label{jennifer} \widetilde{b}_{n,\dd}(x)= \frac{2 \Gamma(d/2) }{\sqrt{\pi} \Gamma ((d-1)/2)} \frac{1}{x} \int_{0}^{x} b_{n,\dd}(u) \left ( 1- \frac{u^2}{x^2}\right )^{(d-3)/2}   {\rm d} u.
\end{equation}  
In view of Equation (\ref{jennifer}), there exists a function $\widetilde{\psi}_{d,\dd} \in {\cal P}(\R^d \times \S^{\dd},\|\cdot\|,\theta)$ such that 
\begin{eqnarray}
 \widetilde{\psi}_{d,\dd}(x,\theta) &=& \sum_{n=0}^{\infty} \widetilde{b}_{n,\dd}(x) {\cal G}_n^{(\dd-1)/2} (\cos \theta) \nonumber \\  &=& \sum_{n=0}^{\infty} \Bigg ( \frac{2 \Gamma(d/2) }{\sqrt{\pi} \Gamma ((d-1)/2)} \frac{1}{x} \int_{0}^{x} b_{n,\dd}(u) \left ( 1- \frac{u^2}{x^2}\right )^{(d-3)/2}   {\rm d} u \Bigg )  {\cal G}_n^{(\dd-1)/2} (\cos \theta) \nonumber \\&=&
 \frac{2 \Gamma(d/2) }{\sqrt{\pi} \Gamma ((d-1)/2)} \frac{1}{x} \int_{0}^{x} \left ( 1- \frac{u^2}{x^2}\right )^{(d-3)/2} \Bigg (\sum_{n=0}^{\infty}  b_{n,\dd}(u) {\cal G}_n^{(\dd-1)/2} (\cos \theta)  \Bigg ) {\rm d} u \nonumber \\ &=&  \frac{2 \Gamma(d/2) }{\sqrt{\pi} \Gamma ((d-1)/2)} \frac{1}{x} \int_{0}^{x} \left ( 1- \frac{u^2}{x^2}\right )^{(d-3)/2} \psi_{1,\dd}(u) {\rm d} u, \nonumber 
\end{eqnarray}
where series and definite integral can be swapped because the series is absolutely convergent \citep{schoenberg1942}. The proof is completed by noting that 
\begin{eqnarray}
 \left| \sum_{n=0}^{\infty} \widetilde{b}_{n,\dd}(x) \right| &=& \left| \frac{2 \Gamma(d/2) }{\sqrt{\pi} \Gamma ((d-1)/2)} \frac{1}{x} \sum_{n=0}^{\infty}  \int_{0}^{x} b_{n,\dd}(u) \left ( 1- \frac{u^2}{x^2}\right )^{(d-3)/2}   {\rm d} u \right| \nonumber \\
 &\le & \frac{2 \Gamma(d/2) }{\sqrt{\pi} \Gamma ((d-1)/2)} \frac{1}{x} \sum_{n=0}^{\infty} b_{n,\dd}(0) \int_{0}^{x}  \left ( 1- \frac{u^2}{x^2}\right )^{(d-3)/2}   {\rm d} u \nonumber \\ &=& \frac{x \sqrt{\pi }  \Gamma \left((d-1)/2\right)}{2 \Gamma \left(d/2\right)}\frac{2 \Gamma(d/2) }{\sqrt{\pi} \Gamma ((d-1)/2)} \frac{1}{x} \sum_{n=0}^{\infty} b_{n,\dd}(0) \nonumber \\
 &=& 1, \nonumber
\end{eqnarray}
where the equality in the third line has been obtained by using formula 3.249.5 of \cite{gradshteyn2014table}. 
\end{proof}

\section{Random Fields over Balls cross Linear or Circular Time} \label{sec6}

\cite{gneiting1999isotropic} considers the function
$$
 \varphi_1(x)=\begin{cases}
    \begin{array}{cc}
      1- \alpha x & 0\le x <1 \\
      &  \\
      1-\alpha (2 -x) & 1 \le x <2, 
    \end{array}
    \end{cases}
$$
continued periodically to $[0,\infty)$ with period $2$. One has 
$$ \varphi_1(x) = 1 - \frac{\alpha}{2} + \sum_{n=1}^{\infty} \frac{4 \alpha \cos \Big ((2n-1)\pi x)}{\pi^2 (2 n -1)^2}, $$ which shows that $\varphi_1 \in {\cal P}(\R,|\cdot|)$ for $0 < \alpha \le 2$. Clearly, the restriction of $\varphi_1 $ to $[0,1)$, that we denote $\widetilde{\varphi}_1(x)=1-\alpha x$, belongs to the class ${\cal P}(\B_1,|\cdot|)$. Using the fact that the Turning Bands provides a bijection for this class, a direct computation shows that 
$$ \varphi_d (x)= 1 - \frac{\Gamma(d/2)}{\sqrt{\pi} \Gamma ((d+1)/2)} \alpha x, $$
belongs to the class  ${\cal P}(\B_d,\|\cdot\|)$. This provides an upper bound for the function $\varphi(x):= 1 - \alpha_d x$, $0\le x <1$, to belong to the class ${\cal P}(\B_d,\|\cdot\|)$.\\
The developments in previous sections allow to elaborate similar strategies for positive definite functions that are isotropic in $d$-dimensional balls and, either, symmetric over linear time ($\R$), or isotropic over circular time ($\S^1$). For instance, one can consider the function 
$$
 \varphi_{1,\dd}(x,t)=\begin{cases}
    \begin{array}{cc}
      1+\alpha(t)/2 - \alpha(t) x & 0\le x <1 \\
      &  \\
      1+\alpha(t)-\alpha(t) (2 -x) & 1 \le x <2, 
    \end{array}
    \end{cases}
$$
continued periodically to $[0,\infty)$ with period $2$, and where $\alpha \in {\cal P}( \R^{\dd},\|\cdot\|)$ or $\alpha \in {\cal P}( \S^{\dd},\theta)$. Clearly, 
$$  \varphi_{1,\dd}(x,t) = 1  + \sum_{n=1}^{\infty} \frac{4 \alpha(t) \cos \big ((2n-1)\pi x\big )}{\pi^2 (2 n -1)^2}, $$
which proves that $\varphi_{1,\dd} \in {\cal P}(\R \times \R^{\dd},|\cdot|,\|\cdot\| )$ (resp. ${\cal P}(\R \times \S^{\dd},|\cdot|,\theta )$). Hence, we can mimic the arguments in \cite{gneiting1999isotropic} in concert with Theorem \ref{turning_bands_theorem} to show that the function 
$$ \varphi_{d,\dd}(x,t)= \frac{1 + \alpha(t) \left ( \frac{1}{2} - \alpha_d x \right )}{1+ \frac{\alpha(0)}{2}}, $$
belongs to the class ${\cal P} (\B_d \times \R^{\dd}, \|\cdot\|,\|\cdot\|)$ or ${\cal P} (\B_d \times \S^{\dd}, \|\cdot\|,\theta)$ provided $\alpha \in {\cal P}( \R^{\dd},\|\cdot\|)$ or $\alpha \in {\cal P}( \S^{\dd},\theta)$, respectively. Here, $\alpha_d = \Gamma(d/2)/ \left ( \sqrt{\pi} \Gamma ((d+1)/2)\right )$.

\subsection{A worked example and plot}

As a concrete example, define $p:[0,\infty)\to \mathbb{R}$ as $p(t) = \exp(-\frac{t^2}{5})$ and  $\bar{h}:\mathbb{R}\times [0,\infty) \to \mathbb{R}$ as 

\begin{equation}
  \bar{h}(x,t)  = \frac{\exp\left({p(t) \cos (2\pi x) } \right) \cos (p(t) \sin (2 \pi x))}{{\rm e}} 
  \label{example1}
\end{equation}
Since $p^2(t) \leq 1$ for all $t$, $\bar{h}$  admits the following expansion \citep[formula 1.449.2]{gradshteyn2014table}:
\[
\bar{h}(x,t)   = \sum_{k=0}^\infty \frac{p^k(t) \cos (2\pi k x)}{k!{\rm e}}.
\]

Note that $\bar{h}(0,0)=1$, and furthermore that $p$ is positive definite on $[0,\infty)$. Given the results above, we thus wish to compute the turning bands operator, extending $\bar{h}$ to a positive definite covariance function in $\mathbb{R}^d \times [0,\infty)$: 

\[
 h_d(x,t) = \frac{2 \Gamma(d/2)}{\sqrt{\pi} \Gamma ((d-1)/2)} \frac{1}{x} \int_0^x \bar{h}(u,t) \left (1-\frac{u^2}{x^2} \right) ^{(d-3)/2}\,{\rm d}u, \quad x,t\in[0,\infty).
\]

\begin{figure}[!ht]
\includegraphics[width=15cm]{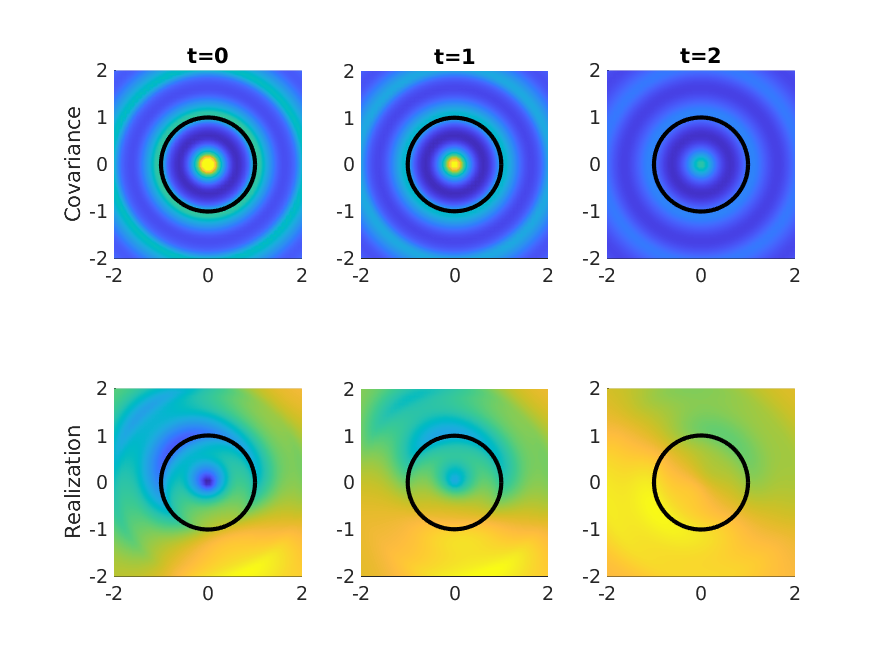}
\centering
\caption{Top: worked example extending covariance \eqref{example1} to $\mathbb{R}^2\times[0,\infty)$ via turning bands operator. Bottom: a realization of a Gaussian random field having this extended covariance at three consecutive time instants}
\label{fig:R2example}
\end{figure}

Figure \ref{fig:R2example} shows the result for $d=2$ by numerically calculating the turning bands operator and simulating the corresponding Gaussian random field. The top panels plot the entire isotropic covariance function extended to the unit disc/plane via the turning bands operator, at three different time lags ($t=0,1,2$). 
The bottom three panels are a single realization of a Gaussian Random field (in $\mathbb{R}^2 \times [0,\infty)$) plotted in the square $[-2,2]^2$ at three consecutive time instants $(0,1,2)$.
The integration was computed in MATLAB R2019b using the adaptive quadrature of \cite{shampine2008vectorized}, and the realization of the Gaussian random field was computed using the covariance matrix decomposition method \citep{davis1987production}, following an implementation of \cite{wangconstantineMATLAB}. 
The unit circle is drawn in black. The code to reproduce these results is shared at \url{https://github.com/sffeng/extension}.

\section*{Acknowledgments}
X. Emery acknowledges the funding of the National Agency for Research and Development of Chile, through grants ANID FONDECYT Regular 1210050 and ANID PIA AFB180004.  E. Porcu and S.F. Feng acknowledge this publication is based upon work supported by the Khalifa University of Science and Technology under Research Center Award No. 8474000331 (RDISC). A. Peron was partially supported by Funda\c{c}\~ao de Amparo \`a Pesquisa do Estado de S\~ao Paulo - FAPESP \# 2021/04269-0.

\bibliographystyle{apalike}
\bibliography{Rudin_EP_XE_SF_APP}

\end{document}